\theoremstyle{plain}
\newtheorem{thm}{Theorem}
\newtheorem{cor}[thm]{Corollary} 
\newtheorem{lem}[thm]{Lemma} 
\theoremstyle{definition}
\theoremstyle{remark}
\theoremstyle{remark}
\theoremstyle{remark}
\theoremstyle{definition}
\theoremstyle{definition}
\theoremstyle{plain}
\theoremstyle{definition}
\theoremstyle{remark}
\theoremstyle{remark}
\theoremstyle{definition}
\newcommand{\p}{\varphi}
\def\freeprod{\font\bigsymbolsfont=cmsy10 scaled \magstep3
 \setbox0=\hbox{\bigsymbolsfont\char'003 }\mathord{\lower1pt\box0}}\relax\ignorespaces
\begin{document}

\title{Quasitraces are traces: A short proof of the finite-nuclear-dimension case.}

\author{Nathanial P.\ Brown and Wilhelm Winter}

\address{Department of Mathematics, Penn State University, State
College, PA 16802, USA}

\email{nbrown@math.psu.edu} 

\address{School of Mathematical Sciences, University of Nottingham, Nottingham NG7 2RD, UK} 

\email{wilhelm.winter@nottingham.ac.uk} 

\thanks{The first named author is partially supported by DMS-0856197. The second named author is partially supported by EPSRC First Grant EP/G014019/1.}

\begin{abstract}  Uffe Haagerup proved that quasitraces on unital exact C$^*$-algebras are traces.   We give a short proof  under the stronger hypothesis of locally finite nuclear dimension; our result generalizes to the case of lower semicontinuous extended quasitraces on nonunital C$^{*}$-algebras.

Uffe Haagerup a d\'emontr\'e qu'une quasi-trace sur une C*-alg\`ebre
exacte \`a \'el\'ement unit\'e est une trace. Nous donnons une
courte d\'emonstration sous l'hypoth\`ese plus forte de dimension
nucl\'eaire localement finie; ce r\'esultat se g\'en\'eralise
jusqu'au cas d'une quasi-trace \'etendue semicontinue inf\'erieurement
sur une C*-alg\`ebre sans \'el\'ement unit\'e. 
\end{abstract}

\maketitle

\noindent
In 1991 Uffe Haagerup distributed a hand written manuscript \cite{haagerup:quasitraces} containing a proof of the following theorem: Every quasitrace on a unital, exact C$^*$-algebra is a trace.\footnote{In this note, ``trace" means a positive, linear functional satisfying the trace condition $\tau(ab) = \tau(ba)$ and ``quasitrace" is synonymous with ``2-quasitrace" as defined in \cite{BH} (in particular, a quasitrace is finite everywhere). We use the term ``extended trace'' (or ``extended quasitrace'', respectively) when the value $+\infty$ is allowed on the positve cone.}  
We will give a very short proof\footnote{In fact, the first published proof. In \cite{Uffe-Steen}, it is  shown that every state on K$_0(A)$ arises from a tracial state, when $A$ is unital and exact. In \cite{kirchberg} it is shown that every simple, stably projectionless, exact C$^*$-algebra has a densely defined trace.  And \cite{Blanchard-Kirchberg} shows that every quasitrace on a non-unital exact C$^*$-algebra is a trace, via reduction to  Haagerup's unpublished paper \cite{haagerup:quasitraces}. None of this published work covers our result.} for  quasitraces on (not necessarily unital) C$^{*}$-algebras with locally finite nuclear dimension (in the sense of \cite{WZ}), a case that covers all simple C$^{*}$-algebras that have been classified by their Elliott invariants so far; in fact, at this point we do not know any example of a nuclear C$^{*}$-algebra which does not have locally finite nuclear dimension.  We generalize this result to lower semicontinuous extended quasitraces on C$^{*}$-algebras with locally finite nuclear dimension.

Our first three lemmas are well known, but we isolate them for convenience.  The proof of the first one reduces to the fact that a quasitrace on a full matrix algebra must take the same value on all minimal projections (since they're all Murray-von Neumann equivalent). In the following, $A$ and $B$ are C$^{*}$-algebras, not necessarily unital.

\begin{lem} 
\label{lem:fdcase}  Quasitraces on finite-dimensional C$^*$-algebras are traces. 
\end{lem}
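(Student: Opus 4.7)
The plan is to reduce to a single matrix block $M_n$ and use the hint that all minimal projections are Murray--von Neumann equivalent.

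Decompose $A\cong\bigoplus_{i=1}^{k}M_{n_i}$. For self-adjoint $a\in A$ with summand decomposition $a=\sum_i a_i$, the $a_i$ commute pairwise and so generate an abelian $C^{*}$-subalgebra; linearity of $\tau$ on commutative $C^{*}$-subalgebras then gives $\tau(a)=\sum_i\tau(a_i)$, and the general case follows by splitting into real and imaginary parts. Hence it suffices to show that $\tau|_{M_{n_i}}$ is a trace for each $i$.

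Fix $M_n$. If $p$ and $q$ are any two rank-one projections in $M_n$, pick a partial isometry $v$ with $v^{*}v=p$ and $vv^{*}=q$; the axiom $\tau(v^{*}v)=\tau(vv^{*})$ forces $\tau(p)=\tau(q)$, a common value $c\ge 0$. For self-adjoint $a\in M_n$, the spectral theorem produces pairwise orthogonal rank-one projections $q_1,\ldots,q_n$ and real numbers $\mu_1,\ldots,\mu_n$ (the eigenvalues listed with multiplicity) so that $a=\sum_j\mu_j q_j$. The $q_j$ and $a$ all lie in a single masa of $M_n$, so linearity on abelian subalgebras yields
\[
\tau(a)\;=\;\sum_{j=1}^{n}\mu_j\tau(q_j)\;=\;c\sum_{j=1}^{n}\mu_j\;=\;c\cdot\Tr(a).
\]
Splitting arbitrary elements of $M_n$ into real and imaginary parts gives $\tau=c\cdot\Tr$ on $M_n$, a trace.

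No step presents a serious obstacle: the heart of the matter is the equality $\tau(p)=\tau(q)$ for rank-one projections, which is a direct application of the $\tau(x^{*}x)=\tau(xx^{*})$ axiom; all other steps are routine accounting with the (rather weak) linearity properties granted by the definition of a quasitrace.
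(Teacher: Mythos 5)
Your proof is correct and follows essentially the same route the paper indicates: reduce to a single matrix block, use Murray--von Neumann equivalence of rank-one projections (via $\tau(v^{*}v)=\tau(vv^{*})$) to see $\tau$ is constant on them, and then the spectral theorem plus linearity on abelian subalgebras forces $\tau=c\cdot\Tr$ on each block. The only content the paper leaves implicit that you supply explicitly is the summation over blocks, which you handle correctly using additivity on commuting self-adjoint elements.
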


\begin{lem} 
\label{lem:orderzero} If $\p \colon A \to B$ is a completely positive order zero map (cf.\ \cite{wz}) and $\tau$ is a  trace (resp.\ quasitrace) on $B$, then $\tau\circ \p$ is a  trace (resp.\ quasitrace) on $A$. 
\end{lem}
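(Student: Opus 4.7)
The plan is to exploit the Winter--Zacharias correspondence \cite{wz} between c.p.\ order zero maps $\p \colon A \to B$ and $*$-homomorphisms $\hat\p \colon C_0((0,1]) \otimes A \to B$; under this correspondence, $\hat\p(\iota \otimes a) = \p(a)$, where $\iota \in C_0((0,1])$ denotes the identity function $\iota(t) = t$.

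First I would record the easy fact that $*$-homomorphisms pull back traces (resp.\ quasitraces) to traces (resp.\ quasitraces): linearity on commutative subalgebras, positivity, the identity $\tau(x^*x) = \tau(xx^*)$, and the 2-quasitrace condition (via tensoring with $\id_{M_2}$) all transfer through a $*$-homomorphism in an obvious way. Applied to $\hat\p$, this shows that the composition $\sigma := \tau \circ \hat\p$ is a trace (resp.\ quasitrace) on $C_0((0,1]) \otimes A$.

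The main step is then a slicing trick based on the factorization $\iota \otimes ab = (\iota^{1/2} \otimes a)(\iota^{1/2} \otimes b)$ in $C_0((0,1]) \otimes A$, which yields
\[
\tau(\p(ab)) \;=\; \sigma(\iota \otimes ab) \;=\; \sigma\bigl((\iota^{1/2} \otimes a)(\iota^{1/2} \otimes b)\bigr).
\]
In the trace case, cycling under $\sigma$ gives $\sigma\bigl((\iota^{1/2}\otimes b)(\iota^{1/2}\otimes a)\bigr) = \tau(\p(ba))$. In the quasitrace case, the analogous factorization $\iota \otimes a^*a = (\iota^{1/2}\otimes a)^*(\iota^{1/2}\otimes a)$, combined with the identity $\sigma(y^*y) = \sigma(yy^*)$, gives $\tau(\p(a^*a)) = \tau(\p(aa^*))$. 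Linearity of $\tau \circ \p$ on a commutative subalgebra $A_0 \subseteq A$ is inherited from linearity of $\sigma$ on the commutative subalgebra $C_0((0,1]) \otimes A_0$, and the 2-quasitrace condition for $\tau \circ \p$ is obtained by running the same argument with $\p \otimes \id_{M_2}$ (still c.p.\ order zero) in place of $\p$, using the 2-quasitrace extension of $\tau$ to $M_2(B)$.

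I do not foresee any genuine obstacle: the only ingredients to verify carefully are the correspondence $\p \leftrightarrow \hat\p$ recalled from \cite{wz} and the routine pullback property for quasitraces under a $*$-homomorphism; both are bookkeeping.
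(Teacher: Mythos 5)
Your argument is correct, but it is doing more work than the paper does: the paper's entire proof of this lemma is a citation to Corollary~3.4 of \cite{wz}, so what you have written is essentially a reconstruction of that cited result rather than a parallel to anything argued in this paper. Your route --- pass to the $*$-homomorphism $\hat\p \colon C_0((0,1])\otimes A \to B$ given by the Winter--Zacharias structure theorem, pull $\tau$ back to $\sigma = \tau\circ\hat\p$, and then use the factorizations $\iota\otimes ab = (\iota^{1/2}\otimes a)(\iota^{1/2}\otimes b)$ and $\iota\otimes a^*a = (\iota^{1/2}\otimes a)^*(\iota^{1/2}\otimes a)$, with $\p\otimes\id_{M_2}$ handling the 2-quasitrace condition --- is the standard way to prove the statement and all the steps check out: the pullback of a (quasi)trace under a $*$-homomorphism is routine, linearity on a commutative subalgebra $A_0$ does pass through $C_0((0,1])\otimes A_0$, and $\p\otimes\id_{M_2}$ is again order zero (e.g.\ because its associated cone homomorphism is $\hat\p\otimes\id_{M_2}$), with $\tau_2\circ(\p\otimes\id_{M_2})$ restricting to $\tau\circ\p$ on the $(1,1)$-corner by the compatibility built into the definition of a 2-quasitrace. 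Two small points worth making explicit if you write this up: the correspondence in \cite{wz} is stated for \emph{contractive} c.p.\ order zero maps, so for general $\p$ one should first rescale by $\|\p\|$ (harmless, since positive multiples of (quasi)traces are (quasi)traces); and in the quasitrace case one should note that additivity is only needed, and only obtained, on commuting self-adjoint elements, which is exactly what the commutative subalgebra $C_0((0,1])\otimes C^*(a,b)$ provides. What your approach buys is a self-contained proof from the structure theorem; what the paper's approach buys is brevity, since the precise statement needed is already on record in \cite{wz}.
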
 
\begin{proof} This is Corollary 3.4 in \cite{wz}. 
\end{proof} 

\begin{lem} 
\label{lem:RadonNikodym}
Let $\tau$ be a quasitrace on $A$ and assume there is a trace $\tau'$ on $A$ such that $\tau \leq \tau'$ (i.e., $\tau(x) \leq \tau'(x)$ for all $x \in A_+$).  Then $\tau$ is a  trace. 
\end{lem}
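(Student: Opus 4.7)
The plan is a noncommutative Radon--Nikodym argument performed in the GNS representation of the dominating trace $\tau'$.

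First, take the GNS triple $(\pi,H,\xi)$ associated to $\tau'$. Since $\tau'$ is tracial, the vector $\xi$ is cyclic and separating for $M:=\pi(A)''$, and $\tau'$ extends to a faithful normal trace on $M$ which will still be denoted $\tau'$. The hypothesis $\tau\leq\tau'$ immediately lets $\tau$ descend to $\pi(A)$: whenever $\pi(a)=0$ for some $a\in A_+$ we have $\tau'(a)=\|a^{1/2}\xi\|^2=0$ and therefore $\tau(a)=0$.

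Next, extend $\tau$ to $M_+$ by the lower-semicontinuous formula
\[
\tilde\tau(x):=\sup\bigl\{\tau(a):a\in A_+,\ \pi(a)\leq x\bigr\},\qquad x\in M_+.
\]
Normality of $\tau'$ on $M$ together with the bound $\tilde\tau\leq\tau'$ guarantees that $\tilde\tau$ is well defined, finite, and ultraweakly lower semicontinuous. One then verifies, using Kaplansky density inside appropriate abelian and $C^*$-subalgebras of $M$ (and passing limits under the normal dominating trace), that $\tilde\tau$ inherits all of the quasitrace axioms from $\tau$.

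Third, run Radon--Nikodym one masa at a time. For any maximal abelian von Neumann subalgebra $\mathcal{A}\subset M$, the restriction $\tilde\tau|_{\mathcal{A}}$ is a positive normal \emph{linear} functional dominated by $\tau'|_{\mathcal{A}}$, so the commutative Radon--Nikodym theorem produces a density $h_{\mathcal{A}}\in\mathcal{A}_+$ with $h_{\mathcal{A}}\leq 1$ and $\tilde\tau(x)=\tau'(h_{\mathcal{A}}x)$ for $x\in\mathcal{A}$. The unitary invariance $\tilde\tau(uxu^*)=\tilde\tau(x)$ combined with the trace property of $\tau'$ yields $uh_{\mathcal{A}}u^*=h_{u\mathcal{A} u^*}$ for every unitary $u\in M$; chasing this equivariance across sufficiently many masas identifies all the $h_{\mathcal{A}}$ with a single central element $h\in Z(M)_+$. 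Hence $\tilde\tau=\tau'(h\,\cdot\,)$ is linear, and $\tau=\tilde\tau\circ\pi$ is a trace on $A$.

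The most delicate step is the middle one: transferring a nonlinear quasitrace to the von Neumann algebra $M$ without losing any of its defining properties. The normality of the dominating trace $\tau'$ and the sandwich $\tau\leq\tilde\tau\leq\tau'$ are precisely what make the limit arguments at this stage go through; without the normal majorant, none of the supremum definitions would yield a genuinely quasitracial object on $M$.
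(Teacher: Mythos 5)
There is a genuine gap, and it sits exactly where you admit the argument is "delicate" and where the quasitrace problem is actually hard. The final masa step does not work as described. From uniqueness of the commutative Radon--Nikodym derivative you do get the equivariance $u h_{\mathcal{A}} u^* = h_{u\mathcal{A}u^*}$, but this only relates a masa to its unitary conjugates: two masas of a von Neumann algebra are in general \emph{not} unitarily conjugate, and even for conjugate ones the relation never forces the densities to be central or to coincide. What you would need is that the family $\{h_{\mathcal{A}}\}$, i.e.\ a functional that is linear on every masa, comes from a single globally linear functional -- and that is precisely the Gleason-type difficulty at the heart of the quasitrace question. In the von Neumann setting this gluing is a theorem (it follows from the Bunce--Wright solution of the Mackey--Gleason problem, or from comparison theory for projections), but it is deep; "chasing this equivariance across sufficiently many masas" is an assertion of the conclusion, not an argument. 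If such a soft chase worked, any quasitrace dominated by a normal trace on any von Neumann algebra would be linear by elementary means, which is not something your sketch can deliver.

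The middle step is also unproved and problematic. The supremum formula $\tilde\tau(x)=\sup\{\tau(a):a\in A_+,\ \pi(a)\le x\}$ is not obviously a quasitrace on $M$: additivity on commuting positive elements of $M$ would require splitting an element $a\le x+y$ of $\pi(A)_+$ into pieces dominated by $x$ and $y$, a Riesz-type decomposition that fails at the operator level; the identity $\tilde\tau(x^*x)=\tilde\tau(xx^*)$ and the extension to $M_2(M)$ are likewise not consequences of normality of $\tau'$ plus the sandwich $\tau\le\tilde\tau\le\tau'$. Even the very first descent to $\pi(A)$ needs care: from $\pi(a)=\pi(b)$ you cannot conclude $\tau(a)=\tau(b)$ by subtracting, because $\tau$ is not additive on noncommuting elements; one must argue via the ideal $\ker\pi$ and the norm continuity of $2$-quasitraces. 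For comparison, the paper does none of this: it simply invokes Corollaries II.1.9 and II.2.4 of Blackadar--Handelman, where the domination hypothesis $\tau\le\tau'$ is exploited directly at the level of the quasi-linear functional to force linearity, with no passage to the enveloping von Neumann algebra and no Radon--Nikodym or masa analysis. Your plan could in principle be completed (the needed von Neumann--algebra facts are true), but only by importing machinery far heavier than the cited corollaries, and the two steps above are genuine missing proofs, not routine verifications.
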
 

\begin{proof} This follows from Corollaries II.1.9 and II.2.4 in \cite{BH}. \end{proof} 





Here is the main result.

\begin{thm} 
\label{main}
If $A$ has nuclear dimension $n$, then every quasitrace $\tau$ on $A$ is a  trace.   
\end{thm}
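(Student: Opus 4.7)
The plan is to construct, from $\tau$ and the nuclear-dimension approximations, a trace $\tau'$ on $A$ dominating (a multiple of) $\tau$, and then to invoke Lemma \ref{lem:RadonNikodym}. I would first fix c.p.\ approximations $A \xrightarrow{\psi_\lambda} F_\lambda \xrightarrow{\p_\lambda} A$ witnessing nuclear dimension $\leq n$: the $F_\lambda = \bigoplus_{i=0}^n F_\lambda^{(i)}$ are finite-dimensional, $\psi_\lambda$ is c.p., $\p_\lambda|_{F_\lambda^{(i)}} = \p_\lambda^{(i)}$ is c.p.\ order zero, and $\p_\lambda \circ \psi_\lambda \to \id_A$ in point-norm topology. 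By Lemmas \ref{lem:orderzero} and \ref{lem:fdcase}, each $\tau \circ \p_\lambda^{(i)}$ is a trace on $F_\lambda^{(i)}$.

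The key trick is to package the $n+1$ colored maps into a single order zero map to a larger target. Define $\tilde\p_\lambda \colon F_\lambda \to \bigoplus_{i=0}^n A$ by $(x_i) \mapsto (\p_\lambda^{(i)}(x_i))$; this is c.p.\ order zero because orthogonality in the direct sum is componentwise, and each $\p_\lambda^{(i)}$ preserves orthogonality. The summed functional $\tilde\tau \colon (a_i) \mapsto \sum_i \tau(a_i)$ is a quasitrace on $\bigoplus_i A$ (all the axioms verify componentwise), so by Lemmas \ref{lem:orderzero} and \ref{lem:fdcase} the composition $T_\lambda := \tilde\tau \circ \tilde\p_\lambda$ is an honest \emph{linear} trace on $F_\lambda$, given explicitly by $T_\lambda(z) = \sum_i \tau(\p_\lambda^{(i)}(z_i))$. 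Pulling back through $\psi_\lambda$ then yields a bounded positive linear functional $\tau'_\lambda := T_\lambda \circ \psi_\lambda$ on $A$; standard quasitrace inequalities from \cite{BH}, applied to the identity $\p_\lambda\psi_\lambda(a) = \sum_i \p_\lambda^{(i)}\psi_\lambda^{(i)}(a)$, give $\tau \leq C\tau'_\lambda$ on $A_+$ for a constant $C$ depending only on $n$.

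The hard part will be checking that a weak-$*$ cluster point $\tau'$ of the bounded net $(\tau'_\lambda)$ is actually tracial (not merely positive linear). To handle this I would leverage the traciality of each $T_\lambda$ on $F_\lambda$ together with the asymptotic multiplicativity inherent in $\p_\lambda\psi_\lambda \to \id_A$, to control the commutators $\psi_\lambda(xy) - \psi_\lambda(yx)$ in the trace seminorm as $\lambda \to \infty$. Once $\tau'$ is identified as a trace with $\tau \leq C\tau'$, Lemma \ref{lem:RadonNikodym} applied to the trace $C\tau'$ concludes that $\tau$ itself is a trace.
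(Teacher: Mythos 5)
Your setup and the first two steps match the paper's strategy (reduce via Lemma \ref{lem:RadonNikodym} to finding a trace dominating $\tau$, and observe that each $\tau\circ\p^{(i)}_\lambda$ is a trace on $F^{(i)}_\lambda$ by Lemmas \ref{lem:fdcase} and \ref{lem:orderzero}), and your functional $\tau'_\lambda = T_\lambda\circ\psi_\lambda = \sum_i \tau\circ\p^{(i)}_\lambda\circ\psi^{(i)}_\lambda$ is exactly the net whose ultralimit the paper uses. But the step you defer --- showing that a weak-$*$ cluster point $\tau'$ is tracial --- is the whole difficulty, and the route you sketch for it does not work. Point-norm convergence $\p_\lambda\circ\psi_\lambda\to\id_A$ gives no control whatsoever on the commutators $\psi_\lambda(xy)-\psi_\lambda(yx)$, nor any ``asymptotic multiplicativity'' of the $\psi_\lambda$: the upward maps in a nuclear-dimension approximation are merely completely positive contractions, and requiring them to be approximately multiplicative into finite-dimensional algebras is a quasidiagonality-type condition that is simply not part of the hypothesis (and cannot be arranged in general). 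So as written, $\tau'$ is just a positive linear functional with $\tau\le 2^n\tau'$ (note also that your claimed inequality $\tau\le C\tau'_\lambda$ for \emph{fixed} $\lambda$ is only valid up to an error that vanishes in the limit, since $\tau(x)$ must first be replaced by $\lim\tau(\p_\lambda\psi_\lambda(x))$ using norm-continuity of $\tau$), and Lemma \ref{lem:RadonNikodym} cannot be applied.

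The missing idea, which is the crux of the paper's proof, is that by \cite[Proposition 3.2]{WZ} the approximations can be chosen so that the induced map
\[
\bar{\psi}\colon A \to \frac{\prod_\lambda F_\lambda}{\bigoplus_\lambda F_\lambda}
\]
is \emph{order zero}. Traciality of the limit functional then comes for free from Lemma \ref{lem:orderzero} applied on the other side: the ultralimit $(\tau\circ\p^{(i)}_\lambda)_\omega$ is a trace on $\prod_\lambda F^{(i)}_\lambda/\bigoplus_\lambda F^{(i)}_\lambda$ (bounded limits of traces are traces on the quotient), and composing it with the order-zero map $\bar{\psi_i}$ yields a genuine trace on $A$; summing over $i$ and multiplying by $2^n$ gives the dominating trace, with the $2$-subadditivity estimate $\tau(\sum_0^n x_i)\le 2^n\sum_0^n\tau(x_i)$ providing the domination in the limit. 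In other words, the extra structural input is not approximate multiplicativity of the $\psi_\lambda$ but approximate order-zero-ness of the downward-induced map into the sequence-algebra quotient, and this is a nontrivial strengthening of the definition of nuclear dimension that must be cited (or proved); without it your cluster point has no reason to satisfy the trace identity. Your packaging of the $\p^{(i)}_\lambda$ into a single order-zero map $\tilde\p_\lambda$ is harmless but does not address this gap, since the obstruction sits entirely on the $\psi_\lambda$ side.
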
 

\begin{proof}  By Lemma \ref{lem:RadonNikodym}, it suffices to show the existence of a  trace $\tau'$ on $A$ such that $\tau \leq \tau'$. 

According to \cite[Proposition 3.2]{WZ} we can find nets of completely positive maps $\psi_\lambda \colon A \to \oplus_{i=0}^n F^{(i)}_\lambda$ and $\p_\lambda \colon \oplus_{i=0}^n F^{(i)}_\lambda \to A$ such that each $F^{(i)}_\lambda$ is finite dimensional; $\|a - \p_\lambda \circ \psi_\lambda (a) \| \to 0$ for all $a \in A$; $\p^{(i)}_\lambda := \p_\lambda|_{F^{(i)}_\lambda}$ is contractive with order zero for all $i,\lambda$;  $\| \psi_\lambda \| \leq 1$ for all $\lambda$; and the induced map $$\bar{\psi} \colon A \to \frac{\prod_{\lambda} F_\lambda}{\bigoplus_\lambda F_\lambda}$$ has order zero. 

Since each $\p^{(i)}_\lambda$ has order zero, $\tau \circ \p^{(i)}_\lambda$ is a  trace on $F^{(i)}_{\lambda}$ (combine Lemmas \ref{lem:fdcase} and \ref{lem:orderzero}). Since $\tau$ is bounded \cite[Corollary II.2.3]{BH}, $\sup_\lambda \| \tau \circ \p^{(i)}_\lambda \| < \infty$ and hence, for each free ultrafilter $\omega$ on the index set $\Lambda$, we get a  trace $(\tau \circ \p^{(i)}_\lambda)_\omega$ on $$\frac{\prod_{\lambda} F^{(i)}_\lambda}{\bigoplus_\lambda F^{(i)}_\lambda}$$ by taking the limit of the $\tau \circ \p^{(i)}_\lambda$'s along $\omega$. Letting $$\bar{\psi_i} \colon A \to \frac{\prod_{\lambda} F^{(i)}_\lambda}{\bigoplus_\lambda F^{(i)}_\lambda}$$ be the induced order-zero map, the composition $(\tau \circ \p^{(i)}_\lambda)_\omega \circ \bar{\psi_i}$ is a  trace on $A$.   

The final fact we need is ``2-subadditivity" of  quasitraces, i.e., the fact that $\tau(x + y) \leq 2(\tau(x) + \tau(y))$ for all $x,y \in A_+$ (see \cite[Corollary II.2.5]{BH}). An induction argument shows the (far from sharp) inequality $\tau(\sum_0^n x_i) \leq 2^n \sum_0^n \tau(x_i)$ for positive $x_{i}$, so the following completes the proof: 
\begin{align*}
\tau(x) & = \lim_\omega \tau( \p_\lambda \circ \psi_\lambda (x) )\\
&\leq 2^n \sum_{i=0}^n  \lim_\omega \tau( \p^{(i)}_\lambda \circ \psi^{(i)}_\lambda (x) )\\
&=  2^n \sum_{i=0}^n (\tau \circ \p^{(i)}_\lambda)_\omega \circ \bar{\psi_i} (x).
\end{align*}
\end{proof} 

Since  quasitraces are norm-continuous \cite[Corollary II.2.5]{BH}, the following generalization is easily deduced.  

\begin{cor} If $A$ has locally  finite nuclear dimension (i.e., every finite set is nearly contained in a subalgebra that has finite nuclear dimension), then every  quasitrace on $A$ is a  trace.  
\end{cor}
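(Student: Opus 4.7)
The plan is to reduce the corollary to Theorem \ref{main} via a standard local approximation argument, leveraging the norm-continuity of quasitraces. Let $\tau$ be a quasitrace on $A$. Since $\tau$ automatically satisfies the trace identity $\tau(x^{*}x)=\tau(xx^{*})$ and is additive on commuting positive elements, the only remaining point to verify is additivity on arbitrary pairs of positive elements: fix $x,y\in A_{+}$; the goal is to show $\tau(x+y)=\tau(x)+\tau(y)$.

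Given $\e>0$, the locally-finite-nuclear-dimension hypothesis produces a C$^{*}$-sub\-algebra $B\subseteq A$ of finite nuclear dimension together with elements $x',y'\in B$ satisfying $\|x-x'\|,\|y-y'\|<\e$. A standard functional-calculus adjustment (e.g.\ replacing $x'$ by the positive part of its self-adjoint part) lets us further arrange $x',y'\in B_{+}$ at the cost of at most an $O(\e)$ change in norm. Restricted to $B$, the functional $\tau$ is again a quasitrace (the defining conditions, including those at the $M_{2}$-level, are inherited by inclusions of C$^{*}$-algebras), so Theorem \ref{main} applies and yields $\tau(x'+y')=\tau(x')+\tau(y')$.

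To close the gap, invoke norm-continuity of $\tau$ (\cite[Corollary II.2.5]{BH}): each of the three differences $|\tau(x+y)-\tau(x'+y')|$, $|\tau(x)-\tau(x')|$, $|\tau(y)-\tau(y')|$ is $O(\e)$, and combining them with the additivity above gives $|\tau(x+y)-\tau(x)-\tau(y)|=O(\e)$. Since $\e$ is arbitrary, we conclude $\tau(x+y)=\tau(x)+\tau(y)$, and hence $\tau$ is a trace.

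There is no serious obstacle here; the argument is essentially bookkeeping. The two points meriting a moment of care are (i) the production of \emph{positive} approximants inside $B$, handled by elementary functional calculus, and (ii) the fact that the restriction of a quasitrace to a C$^{*}$-subalgebra is still a quasitrace, which is immediate from the definition since all the defining relations involve only elements of the subalgebra (and $2\times 2$ matrices thereof).
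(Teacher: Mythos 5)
Your argument is correct and is exactly the deduction the paper intends: the paper leaves the corollary as ``easily deduced'' from Theorem~\ref{main} together with the norm-continuity of quasitraces \cite[Corollary II.2.5]{BH}, and your write-up (restrict $\tau$ to a finite-nuclear-dimension subalgebra containing positive approximants, apply Theorem~\ref{main} there, then pass to the limit by continuity, having reduced to additivity on $A_{+}$) fills in precisely those details. No gaps.
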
 


We are indebted to George Elliott for drawing our attention to the question whether the preceding results generalize to extended quasitraces. The answer is affirmative in the lower semicontinuous case. 

\begin{thm}
\label{rems}
If $A$ has locally finite nuclear dimension, then every  lower semicontinuous extended quasitrace $\tau$ on $A$ is an extended trace.
\end{thm}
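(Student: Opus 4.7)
\emph{Plan.} My plan is to imitate the proof of Theorem~\ref{main}, extending each ingredient to the $[0,\infty]$-valued, lower semicontinuous setting.

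First I would establish extended analogues of the three preparatory lemmas. Lemma~\ref{lem:fdcase} extends trivially, since on each simple summand $M_n$ all minimal projections are Murray--von Neumann equivalent, forcing an extended quasitrace to be determined by a single weight in $[0,\infty]$. The proof of Lemma~\ref{lem:orderzero} from \cite{wz} uses only the structure theorem $\p(a) = \pi(a)h$ for order zero CP maps (with $h$ central in the image algebra) and transfers the quasitrace/trace property to $\tau \circ \p$ without any finiteness assumption. The extended version of Lemma~\ref{lem:RadonNikodym}---an l.s.c.\ extended quasitrace dominated by an l.s.c.\ extended trace is itself an extended trace---should follow by restricting to the dense $*$-ideal where the dominating trace is finite, applying the original Blackadar--Handelman result there, and then extending by lower semicontinuity.

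Next I would reduce to $a, b \in A_+$ with $\tau(a), \tau(b) < \infty$, since monotonicity makes the remaining cases trivial (both sides equal $+\infty$). With this reduction, I would repeat the construction from the proof of Theorem~\ref{main}: locally finite nuclear dimension provides nets of CP maps $\psi_\lambda, \p_\lambda$ with finite-dimensional intermediate algebras $F^{(i)}_\lambda$, and each $\tau \circ \p^{(i)}_\lambda$ is an extended trace on $F^{(i)}_\lambda$ by the extended versions of Lemmas~\ref{lem:fdcase} and~\ref{lem:orderzero}. The ultrafilter limit $(\tau \circ \p^{(i)}_\lambda)_\omega$ defines an extended trace on the ultraproduct $\prod_\lambda F^{(i)}_\lambda / \bigoplus_\lambda F^{(i)}_\lambda$---the additivity and trace identities pass through $\lim_\omega$ because finite sums commute with ultrafilter limits in $[0,\infty]$---and composing with the induced order zero map $\bar{\psi_i}$ yields an extended trace $\tau^{(i)} := (\tau \circ \p^{(i)}_\lambda)_\omega \circ \bar{\psi_i}$ on $A$ by the extended Lemma~\ref{lem:orderzero}.

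Choosing $\omega$ so that $\lim_\omega$ realizes $\liminf_\lambda \tau(\p_\lambda \psi_\lambda(x))$ and invoking lower semicontinuity of $\tau$ via the convergence $\p_\lambda \psi_\lambda(x) \to x$, the iterated $2$-subadditivity calculation from the proof of Theorem~\ref{main} yields
\[
\tau(x) \le 2^n \sum_{i=0}^n \tau^{(i)}(x), \qquad x \in A_+.
\]
The right-hand side is an l.s.c.\ extended trace on $A$, and the extended Lemma~\ref{lem:RadonNikodym} then concludes that $\tau$ is an extended trace. I expect the main technical obstacles to be establishing the extended Radon--Nikodym statement and ensuring the well-definedness of the ultrafilter trace on the full ultraproduct when the $\tau \circ \p^{(i)}_\lambda$ fail to be uniformly bounded; both should yield to restriction to the finite part of $\tau$ (e.g., working inside $\overline{(c-\epsilon)_+ A (c-\epsilon)_+}$ for $c = a+b$) together with the order-zero structure of $\bar{\psi_i}$.
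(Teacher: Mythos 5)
Your plan founders on the very lemma you make it rest on. The ``extended Radon--Nikodym'' statement---an l.s.c.\ extended quasitrace dominated by an l.s.c.\ extended trace is an extended trace---cannot serve as a reduction: the functional that is $0$ at $0$ and $+\infty$ on every nonzero positive element is itself an l.s.c.\ extended trace and dominates \emph{every} extended quasitrace, so your lemma is literally equivalent to the unrestricted assertion that all l.s.c.\ extended quasitraces (on arbitrary C$^*$-algebras) are traces. Accordingly it cannot be proved by the sketch you give: the set where the dominating trace is finite need not be dense (it can be $\{0\}$), and even when it is dense it is an ideal rather than a C$^*$-algebra, so Corollaries II.1.9 and II.2.4 of \cite{BH} do not apply directly. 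Domination only has content when the dominating functional is finite where you need it, and in your construction the candidate $2^n\sum_i\tau^{(i)}$ comes with no finiteness control whatsoever---indeed the $\omega$-limits $(\tau\circ\varphi^{(i)}_\lambda)_\omega$ are not even well defined on the ultraproduct without a uniform bound $\sup_\lambda\|\tau\circ\varphi^{(i)}_\lambda\|<\infty$, since the limit need not vanish on $\bigoplus_\lambda F^{(i)}_\lambda$. You acknowledge this last point, but the fix you gesture at (work inside $B_\epsilon=\overline{(c-\epsilon)_+A(c-\epsilon)_+}$ for $c=a+b$) is exactly where the real work lies: one must show that $\tau(a),\tau(b)<\infty$ forces $\tau$ to be a genuine bounded quasitrace on $B_\epsilon$. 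The paper does this via Cuntz subequivalence $a+b\precsim a\oplus b$, R{\o}rdam's lemma \cite[Proposition~2.4]{Ror:UHFII}, and the dimension-function estimate $d_\tau((a+b-\epsilon)_+)\le\frac1\delta(\tau(a)+\tau(b))$; no trace of this appears in your proposal. Once you have that restriction, there is no need to re-run the ultraproduct argument at all---Theorem~\ref{main} applies verbatim to $B_\epsilon$ (hereditary subalgebras inherit finite nuclear dimension by \cite[Proposition~2.5]{WZ})---but you then still need the lower-semicontinuity patching with the approximate unit $d_\epsilon=h_\epsilon(a+b)$ to convert traciality on the $B_\epsilon$'s into $\tau(a+b)=\tau(a)+\tau(b)$, another step your outline omits.

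The locally finite case is also not addressed correctly. Locally finite nuclear dimension gives no single $n$ and no global system $(\psi_\lambda,\varphi_\lambda)$ with a fixed number of colours for all of $A$, so the $2^n$-subadditivity bound has no global meaning; and the device that rescues the bounded Corollary---norm-continuity of quasitraces \cite[Corollary II.2.5]{BH}---is unavailable for an extended $\tau$, which is merely lower semicontinuous. This is why the paper's proof has a substantial second half: approximate $a,b$ by $a',b'$ in a finite-nuclear-dimension subalgebra $B$, compare the various cut-downs via \cite[Lemma~2.2]{KR2}, and control the errors by $\epsilon\cdot d_\tau$-terms (finiteness of $d_\tau(a),d_\tau(b)$ being arranged first, then removed by passing to $(a-\eta)_+$ and using lower semicontinuity). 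Your proposal contains no substitute for this step, and without it the result is only obtained for algebras of finite nuclear dimension, not locally finite. (A smaller gap of the same flavour: $2$-subadditivity is quoted in \cite{BH} for everywhere-finite quasitraces; for extended l.s.c.\ quasitraces it needs a separate justification.)
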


\begin{proof}
Let us first assume that $A$ has finite nuclear dimension. 

It will suffice to show that $\tau(a+b)=\tau(a)+\tau(b)$ for all $a,b \in A_{+}$ for which $\tau(a)$ and $\tau(b)$ are both finite. 

Observe that $a+b$ is Cuntz subequivalent to $a \oplus b$ in the sense of \cite{BH}. By \cite[Proposition~2.4]{Ror:UHFII},  for any  $\epsilon>0$  there are $\delta>0$ and  $x \in M_{2,1}(A)$ such that $(a+b - \epsilon)_{+} = x^{*} ((a - \delta)_{+} \oplus (b-\delta)_{+}) x$.  By \cite{BH} this implies that 
\begin{eqnarray*}
d_{\tau}((a+b - \epsilon)_{+}) & \le &  d_{\tau}((a - \delta)_{+} \oplus (b-\delta)_{+}) \\
& = & d_{\tau}((a - \delta)_{+}) + d_{\tau}( (b-\delta)_{+}) \\
& \le & \frac{1}{\delta} (\tau(a) +\tau(b))\\
&  < & \infty,
\end{eqnarray*} 
where $d_{\tau}$ denotes the lower semicontinuous dimension function associated with $\tau$. 

Again by \cite{BH} for any positive contraction $c \in B_{\epsilon}:= \overline{((a+b-2\epsilon)_{+}) A ((a+b-2\epsilon)_{+})}\subset A$ we have $\tau(c) \le d_{\tau}((a+b-\epsilon)_{+})< \infty$, whence $\tau$ restricts to a  (bounded) quasitrace $\tau_{\epsilon}$ on $B_{\epsilon}$. Since $B_{\epsilon}$ has  finite nuclear dimension by \cite[Proposition~2.5]{WZ}, we see from Theorem~\ref{main} that every $\tau_{\epsilon}$ is in fact a  trace on $B_{\epsilon}$. 

Let $h_{\epsilon} \in \mathcal{C}_{0}(0,1]$ be the positive  function which is $0$ on $(0,2\epsilon]$, $1$ on $[3\epsilon,1]$, and linear on $[2\epsilon,3\epsilon]$, and set $d_{\epsilon}:= h_{\epsilon}(a+b) \in A$; note that the $d_{\epsilon}$ form an approximate unit for $\overline{(a+b)A(a+b)} \subset A$ and that $B_{\epsilon} = \overline{d_{\epsilon}  A d_{\epsilon}}$. By lower semicontinuity of $\tau$, and since each $\tau_{\epsilon}$ is a trace on $B_{\epsilon}$, we have
\begin{eqnarray*}
\tau(a+b) & = & \lim_{\epsilon \to 0} \tau(d_{\epsilon} (a+b)d_{\epsilon}) \\
& = & \lim_{\epsilon \to 0} \tau_{\epsilon}(d_{\epsilon} (a+b)d_{\epsilon}) \\
& = & \lim_{\epsilon \to 0} \tau_{\epsilon}(d_{\epsilon} a d_{\epsilon}) + \lim_{\epsilon \to 0} \tau_{\epsilon}(d_{\epsilon} bd_{\epsilon}) \\
& = & \lim_{\epsilon \to 0} \tau(d_{\epsilon} a d_{\epsilon}) + \lim_{\epsilon \to 0} \tau(d_{\epsilon} bd_{\epsilon}) \\
& = & \tau(a) + \tau(b),
\end{eqnarray*}
as desired.

Next, suppose $A$ has only locally finite nuclear dimension, and let $a,b \in A$ be positive contractions. Suppose first  that $\tau(a)$, $\tau(b)$, $d_{\tau}(a)$, $d_{\tau}(b)$ are all finite, say bounded by some $K \ge 1$.

Let $\eta >0$ and choose $0<\epsilon<\eta/8K$ such that 
\[
\tau((a+b-4\epsilon)_{+}) \ge \tau(a+b) - \eta, \, \tau((a-2\epsilon)_{+}) \ge \tau(a) - \eta \mbox{ and } \tau((b-2\epsilon)_{+}) \ge \tau(b) - \eta;
\]
this is possible by lower semicontinuity of $\tau$. Since $A$ has locally finite nuclear dimension, there are a C$^{*}$-subalgebra $B \subset A$ with finite nuclear dimension and positive contractions $a',b' \in B$ such that
\[
\|a' - a\|, \, \|b'-b\| <\epsilon.
\]
From \cite[Lemma~2.2]{KR2} we see that there are contractions $x,y,z,r,s,v,w \in A$ such that 
\[
(a'-\epsilon)_{+} = x^{*} a x,\,   (b'-\epsilon)_{+} = y^{*} b y, \, (a'+b' - 2\epsilon)_{+} = z^{*} (a+b)z,
\]
\[
(a-2 \epsilon)_{+} = r^{*} (a'-\epsilon)_{+} r, \, (b-2\epsilon)_{+} = s^{*} (b'-\epsilon) s, \, (a+b - 4 \epsilon)_{+} = v^{*} (a'+b'-2\epsilon)_{+} v,
\]
\[
 ((a'-\epsilon)_{+} + (b'-\epsilon)_{+} - 4\epsilon)_{+} = w^{*}(a+b)w. 
\]

We estimate
\begin{eqnarray*}
\tau(a+b) & \ge & \tau(((a'-\epsilon)_{+} +(b'-\epsilon)_{+} -4\epsilon)_{+}) \\
&\ge& \tau((a'-\epsilon)_{+} + (b'-\epsilon)_{+}) - 4 \epsilon \cdot  d_{\tau}((a'-\epsilon)_{+} + (b'-\epsilon)_{+}) \\
& \ge &  \tau((a'-\epsilon)_{+} + (b'-\epsilon)_{+}) - 4 \epsilon \cdot  (d_{\tau}((a'-\epsilon)_{+}) + d_{\tau}((b'-\epsilon)_{+})) \\ 
& \ge & \tau((a'-\epsilon)_{+} + (b'-\epsilon)_{+}) - 4 \epsilon \cdot  (d_{\tau}(a) + d_{\tau}(b)) \\ 
& \ge & \tau((a'-\epsilon)_{+} + (b'-\epsilon)_{+}) - \eta\\
& \ge & \tau((a' + b'-2\epsilon)_{+}) - \eta\\
& \ge & \tau((a + b-4\epsilon)_{+}) - \eta\\ 
& \ge & \tau(a + b) - 2\eta,\\ 
\end{eqnarray*}
which implies 
\[
|\tau(a+b) - \tau((a'-\epsilon)_{+} + (b'-\epsilon)_{+}) | <\eta.
\]
Since $\tau|_{B}$ is an extended trace by the first part of the proof, we have 
\[
\tau((a'-\epsilon)_{+}+(b'-\epsilon)_{+}) = \tau((a'-\epsilon)_{+}) + \tau((b'-\epsilon)_{+}).
\] 
Moreover, we have
\[
\tau(a) \ge \tau((a'-\epsilon)_{+}) \ge \tau((a-2\epsilon)_{+}) \ge \tau(a) - \eta
\]
and 
\[
\tau(b) \ge \tau((b'-\epsilon)_{+}) \ge \tau((b-2\epsilon)_{+}) \ge \tau(b) - \eta.
\]
This yields
\[
|\tau((a'-\epsilon)_{+}) + \tau((b'-\epsilon)_{+}) - (\tau(a) +\tau(b))| <  \eta,
\]
hence
\[
|\tau(a+b) - \tau(a) + \tau(b)| < 2 \eta.
\]
Since $\eta>0$ was arbitrary, we obtain
\[
\tau(a+b) = \tau(a) + \tau(b),
\]
at least if $\tau(a)$, $\tau(b)$, $d_{\tau}(a)$ and $d_{\tau}(b)$ are all finite.

If we only assume $\tau(a)$ and $\tau(b)$ to be finite, then for any $\eta>0$ $d_{\tau}((a-\eta)_{+})$ and $d_{\tau}((b-\eta)_{+})$ will be finite, and by the preceding argument we have 
\[
\tau((a-\eta)_{+} +(b-\eta)_{+}) = \tau((a-\eta)_{+}) + \tau((b-\eta)_{+});
\]
by lower semicontinuity this yields 
\[
\tau(a+b) =\lim_{\eta \to 0} \tau((a-\eta)_{+} +(b-\eta)_{+}) = \lim_{\eta \to 0} \tau((a-\eta)_{+}) + \lim_{\eta \to 0} \tau((b-\eta)_{+}) = \tau(a) + \tau(b).
\]
If $\tau(a)$ and $\tau(b)$ are not both finite, there is nothing to show, so we are done.
\end{proof}

\noindent{\textbf{Acknowledgement:} The first author thanks the second, and the other members of Nottingham's math department, for the hospitality during the visit that resulted in this paper. He also thanks the University of Hawaii for their hospitality during the sabbatical year when this work was completed.}

\end{document}